\newtheorem{theorem}{Theorem}[section]
\newtheorem{lemma}{Lemma}[section]
 \newtheorem{corollary}{Corollary}[section]
 \newtheorem{example}{Example}[section]
 \newtheorem{proposition}{Proposition}[section]
\newcommand{\cod}{{\operatorname{cod}}}
\begin{document}

\title{Two results on character codegrees}
\date{}

\author{Yang Liu}

\author{Yong Yang}

\address{School of Mathematical Science, Tianjin Normal University, Tianjin 300387, P. R. China}
\email{liuyang@math.pku.edu.cn}

\address{Department of Mathematics, Texas State University, San Marcos, TX 78666, USA} \email{yang@txstate.edu}

\makeatletter

\makeatother





\maketitle

\textbf{Abstract.} \, {Let $G$ be a finite group and $\mathrm{Irr}(G)$ be the set of irreducible characters of $G$. The codegree of an irreducible character $\chi$ of the group $G$ is defined as  $\mathrm{cod}(\chi)=|G:\mathrm{ker}(\chi)|/\chi(1)$. In this paper, we study two topics related to the character codegrees. Let  $\sigma^c(G)$ be the maximal integer $m$ such that there is a member in $\mathrm{cod}(G)$ having $m$ distinct prime divisors, where $\mathrm{cod}(G)=\{\mathrm{cod}(\chi)|\chi\in \mathrm{Irr}(G)\}$. One is related to the codegree version of the  Huppert's $\rho$-$\sigma$ conjecture and we obtain the best possible bound for $|\pi(G)|$ under the condition $\sigma^c(G) = 2,3,$ and $4$ respectively.
 The other is related to the prime graph of character codegrees and we show that the codegree prime graphs of several classes of groups can be characterized only by graph theoretical terms.}

\textbf{Keywords.} \,  {finite group, character codegree, prime graph}        

\textbf{MSC2010.} \,  {20C15, 20D25}

\Large
\section{Introduction}
 Throughout this paper, $G$ is  a finite group and $\mathrm{Irr}(G)$ is the set of irreducible characters of $G$.
   The concept of character codegree  (defined as $|G|/\chi(1)$ for any nonlinear irreducible character $\chi$ of $G$) was first introduced in \cite{ch} to characterize the structure of finite groups. However for nonlinear character $\chi\in \mathrm{Irr}(G/N)$, where $N$ is a nontrivial normal subgroup of $G$, $\chi$ will have two different codegrees when it is considered as the character of $G$ and $G/N$ respectively. So the concept was improved as $\mathrm{cod}(\chi)=|G:\mathrm{ker}(\chi)|/\chi(1)$ for any character $\chi\in \mathrm{Irr}(G)$ by Qian, Wang, and Wei in \cite{qww}.   Now many properties of codegree have been studied, such as the relationship between the codegrees and the element orders \cite{i,q}, codegrees of $p$-groups \cite{CroomeLewis,dl}, and groups with few codegrees ~\cite{abg,LiuYang01,LiuYang02}.

  In this paper, we study two topics related to the character codegrees. Set $\cod(G) =\{\mathrm{cod}(\chi) \ |\ \chi \in \mathrm{Irr}(G)\}$. Denote by $\pi(n)$ be the set of prime divisors of rational integer $n$ and we use $\pi(G)$ instead of $\pi(|G|)$. Let  $\sigma^c(G)$ be the maximal integer $m$ such that there is a member in $\rm{cod}$$(G)$ having $m$ distinct prime divisors. The first topic is about the following question which is raised in \cite[Question A]{qww} and  called the codegree version of the  Hupperts's $\rho$-$\sigma$ conjecture: Is there any constant $k$ (independent of $G$) such that $|\pi(G)| \leq k \sigma^c(G)$? In
particular, is it true that $|\pi(G)| \leq 4$ when $\sigma^c(G) = 2$?

We note that the first question was answered in \cite{yq} and further improved in \cite{Yang}. In this paper, we show that the second  question is not always true in general and we obtain the best possible bound for $|\pi(G)|$ under the condition $\sigma^c(G) = 2,3,$ and $4$ respectively.

 \begin{theorem} Let $G$ be a finite group, then

(i)   $|\pi(G)| \leq 5$ when $\sigma^c(G) = 2$.

(ii)  $|\pi(G)| \leq 8$ when $\sigma^c(G) = 3$.

(iii) $|\pi(G)| \leq 12$ when $\sigma^c(G) = 4$.
\end{theorem}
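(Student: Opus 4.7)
The plan is to induct on $|G|$, combining the monotonicity of $\sigma^{c}$ under normal subgroups and quotients with a classification-based analysis of the nonabelian simple sections of $G$.

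First, I would record the elementary fact that $\sigma^{c}(G/N) \leq \sigma^{c}(G)$ for every $N \triangleleft G$ (since inflation from $G/N$ preserves codegrees), and that $\sigma^{c}(N) \leq \sigma^{c}(G)$ (since for each $\theta \in \mathrm{Irr}(N)$ Clifford theory produces $\chi \in \mathrm{Irr}(G\mid\theta)$ with $\mathrm{cod}(\theta) \mid \mathrm{cod}(\chi)$). Combined with the Qian--Wang--Wei relation $\pi(G) \subseteq \pi(\mathrm{cod}(G))$, these tools let us reduce to groups with a unique minimal normal subgroup $M$ and to transfer the problem across chief factors.

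The second step is a case split on $M$. If $M = S^{k}$ for a nonabelian simple group $S$, I would invoke the classification of finite simple groups: when $S$ is of Lie type in defining characteristic $p$, the Steinberg character has $p$-power degree, so $\mathrm{cod}(\mathrm{St})$ is divisible by every prime in $\pi(S) \setminus \{p\}$, forcing $|\pi(S)| \leq \sigma^{c}(S)+1$; alternating and sporadic groups are handled by direct inspection of character tables, and extensions by outer automorphism subgroups contribute only a bounded (by CFSG) number of additional primes. If $M$ is elementary abelian of exponent $p$, I would apply the inductive bound to $G/M$ and then absorb the prime $p$ either because $p \in \pi(G/M)$ already, or, via a Clifford/Gallagher analysis, by producing an irreducible character of $G$ whose codegree couples $p$ with primes from the faithful action of $G/C_{G}(M)$ on $M$, forcing $\sigma^{c}(G) \geq \sigma^{c}(G/M)+1$ in the borderline case. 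Sharpness of the constants $5, 8, 12$ would be verified by exhibiting explicit (solvable or small almost simple) groups realizing them.

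The main obstacle will be the abelian minimal-normal-subgroup case, which is essentially a codegree analog of the Manz--Wolf inequality on primes dividing character degrees: naively iterating through the chief series could introduce a new prime without increasing $\sigma^{c}$, so one must show that the primes contributed by successive chief factors eventually appear together in the codegree of a single irreducible character of $G$. The precise constants $5$, $8$, $12$ emerge from the numerical bookkeeping of this argument, and matching them with explicit extremal examples pins down the sharpness claim of the theorem.
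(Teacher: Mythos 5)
Your proposal does not close the argument; it is an outline whose decisive step is exactly the part left undone. The paper does not re-derive these bounds at all: it invokes Isaacs' theorem from \cite{i} (for every $g\in G$ there is $\chi\in\mathrm{Irr}(G)$ with every prime divisor of $o(g)$ dividing $\mathrm{cod}(\chi)$), which gives that the element-order invariant $\sigma^*(G)$ is at most $\sigma^c(G)$, and then quotes Keller's theorems \cite{k,Keller3} bounding $|\pi(G)|$ in terms of $\sigma^*(G)$, which is where the constants $5$, $8$, $12$ actually come from; sharpness is supplied by the explicit construction in Example 2.1 together with Propositions 2.1 and 2.2. Your plan, by contrast, proposes to redo this from scratch by induction on a chief series with a CFSG analysis of nonabelian layers, and you concede that the abelian minimal normal subgroup case is ``the main obstacle'' and that the constants ``emerge from the numerical bookkeeping.'' That bookkeeping is not a routine finishing touch: it is the content of Keller's two papers, and nothing in your sketch performs it. In particular, the assertion that in the borderline case one can force $\sigma^c(G)\geq\sigma^c(G/M)+1$ is unsubstantiated and is precisely the kind of statement that fails in general --- the whole point of Example 2.1 is that many primes can accumulate through successive chief factors while $\sigma^c$ stays at $4$ (indeed $|\pi(G)|=12$ there), so any such claim must be hedged by a quantitative argument you have not given.

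Two smaller points. First, your reduction lemmas ($\sigma^c(G/N)\leq\sigma^c(G)$ and $\sigma^c(N)\leq\sigma^c(G)$ via divisibility of codegrees under Clifford correspondents) are fine and standard, but they only set the stage. Second, your Steinberg-character bound $|\pi(S)|\leq\sigma^c(S)+1$ for simple groups of Lie type is correct, yet the nonsolvable layers are not where the difficulty or the extremal examples live; the extremal groups realizing $5$, $8$, $12$ are solvable, and without either carrying out the solvable analysis in full or, as the paper does, passing through element orders to results already in the literature, the theorem is not proved. If you want a self-contained route, the efficient fix is to insert Isaacs' element-order-to-codegree theorem at the start and then cite (or reprove) the element-order $\rho$--$\sigma$ bounds, rather than attempting a direct codegree induction.
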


  The element order prime graph (Gruenberg-Kegel graph) of a finite group
is the (simple undirected) graph  whose vertices are the prime numbers dividing the order of the group, with two vertices being linked by an edge if and only if their product divides the order of some element of the group. Gruber et al. characterized the element order prime graph of solvable groups only using  graph theoretical terms in \cite{gk}. Recently  Florez  et al. gave complete characterizations for the element order prime graphs of several classes of groups, such as groups
of square-free order, meta-nilpotent groups, groups of cube-free order, and, for any $n \in N$,
solvable groups of $n^{th}$-power-free order in \cite{fh}.

The codegree prime graph built on $\cod(G)$, that we denote by $\Delta_{\cod}(G)$, is the (simple undirected) graph whose vertices are the prime divisors of the numbers in $\cod(G)$, and two distinct vertices $p$, $q$ are adjacent if and only if $pq$ divides some number in $\cod(G)$.
 The second topic of this paper concerns the relation of solvable groups with the corresponding prime graph of character codegrees and we obtain several  results parallel to the results in \cite{fh,gk}.

\section{The codegree analogue of the Huppert's $\rho$-$\sigma$ conjecture}

We first introduce some notation. Let $G$ be a group and $N\unlhd G$. For $\chi\in \mathrm{Irr}(G)$, $\mathrm{Irr}(\chi|_N)$ is the set of irreducible constitutes of the restriction of $\chi$ to $N$.

\begin{lemma}[Lemma 1.8 of \cite{k}]
Let $|G|=q_1\cdots q_nr_1\cdots r_m$ with mutually distinct primes $q_i,r_j$ $(n,m\in \mathbb{N})$. Assume that $|F(G)|=r_1r_2\cdots r_m$ and $p$ is a prime with $|G| \mid p-1$. Then there exists a faithful irreducible $GF(p)G$-module $V$ such that $F(G)$ acts fixed point freely on $V$.
\end{lemma}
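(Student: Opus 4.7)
The plan is to construct $V$ by inducing a suitable one-dimensional representation of $F(G)$ up to $G$. First I would draw out two preliminary consequences of the hypothesis. Since $|G| = q_1\cdots q_n r_1\cdots r_m$ is squarefree, $G$ is solvable: the Sylow subgroup for the largest prime divisor is cyclic of prime order and normal by Sylow's counting, and one inducts. In particular, the standard property $C_G(F(G)) \leq F(G)$ is available. Also, $F(G)$ is nilpotent of squarefree order $r_1\cdots r_m$, hence a direct product of cyclic groups of prime order and therefore cyclic.

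Since $|G|\mid p-1$, the prime $p$ does not divide $|G|$, so $\mathbb{F}_p G$ is semisimple by Maschke's theorem, and $\mathbb{F}_p^{\times}$ contains an element of order $|F(G)|$. Hence there is a faithful linear character $\lambda\colon F(G)\to\mathbb{F}_p^{\times}$, i.e.\ a one-dimensional faithful $\mathbb{F}_p F(G)$-module. I would let $V$ be any irreducible constituent (direct summand, by semisimplicity) of $\mathrm{Ind}_{F(G)}^G\mathbb{F}_p(\lambda)$ whose restriction to $F(G)$ contains $\lambda$; such a summand exists by Frobenius reciprocity.

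It then remains to check the two required properties of $V$. For fixed-point-freeness, Clifford's theorem identifies $V|_{F(G)}$ with a direct sum of $G$-conjugates of $\lambda$, and since conjugation carries $\ker\lambda$ to $\ker(\lambda^g)$, every such conjugate is again faithful; on each one-dimensional summand a nontrivial element of $F(G)$ acts by a nontrivial scalar and so has no nonzero fixed vector. For faithfulness of $V$, let $N=\ker V$; then $N\cap F(G)\leq\ker\lambda=1$, and since $N$ and $F(G)$ are both normal, $[N,F(G)]\leq N\cap F(G)=1$. Therefore $N\leq C_G(F(G))\leq F(G)$ by solvability, forcing $N\leq N\cap F(G)=1$.

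The main point to watch is the faithfulness step: it hinges on being able to invoke $C_G(F(G))\leq F(G)$, which in turn requires solvability of $G$. The squarefree-order hypothesis supplies this, so once that reduction is noted the rest of the argument is routine; without it, one would have to work harder to rule out a nontrivial kernel that centralizes $F(G)$ but is not contained in it.
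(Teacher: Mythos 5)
Your construction is correct, and it is essentially the standard argument: the paper itself gives no proof of this statement (it is quoted verbatim as Lemma 1.8 of Keller's paper \cite{k}), so there is nothing in-text to compare against, but inducing a faithful linear character $\lambda$ of the cyclic group $F(G)$ from $\mathbb{F}_p^{\times}$ (available since $|G|\mid p-1$), taking an irreducible constituent containing $\lambda$, and then using Clifford's theorem for fixed-point-freeness and $C_G(F(G))\le F(G)$ for faithfulness is exactly the natural route, and every step you give goes through. The only imprecise point is your justification of solvability: Sylow counting alone does not show that the Sylow subgroup for the largest prime is normal in a group of squarefree order (e.g.\ for $|G|=105$ the congruences allow $n_7=15$, and one needs an element count or transfer to rule it out); the clean statement to invoke is the classical theorem of H\"older/Burnside that a group all of whose Sylow subgroups are cyclic is metacyclic, hence supersolvable, which gives both solvability and $C_G(F(G))\le F(G)$. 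This is a standard fact, so it is a presentational blemish rather than a gap in the argument.
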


\begin{example}[Example 1.9 of \cite{k}]
\end{example}

Let  $q_1, q_2, q_3, q_4, r_1, r_2, r_3, r_4$ be mutually distinct prime numbers where the $q_i$ may be arbitrarily chosen and the $r_i$ may have the following properties:

\centerline{$q_1 \mid r_1-1$,  $q_2 \mid r_2-1$,  $q_1q_2q_3 \mid r_3-1$,  $q_2q_4 \mid r_4-1$.}

The $r_i (i=1,2,3,4)$  surely exist because of the Dirichlet's theorem on primes in arithmetic progression. We want $G/G''$ to have the following structure:

$G/G'\cong Q_1\times Q_2\times Q_3\times Q_4$ with $Q_i$ cyclic of order $q_i$,

$G'/G''\cong R_1\times R_2\times R_3\times R_4$ with $R_i$ cyclic of order $r_i$, and $Q_i\leq N_G(R_j)$ for all $i, j$. Then  $Q_i$ can only act trivially or fixed point freely on $R_j$ $(i,j\in\{1,2,3,4\})$, and we demand:

$Q_1$ acts fixed point freely only on $R_1$ and $R_3$,

$Q_2$ acts fixed point freely only on $R_2$, $R_3$, and $R_4$,

$Q_3$ acts fixed point freely only on $R_3$, and finally

$Q_4$ acts fixed point freely only on $R_4$.

This is possible because of our number theoretical restriction on the $r_i$. Now let $s_1, s_2, s_3, s_4$ be mutually distinct primes such that for $i=1,2,3,4$, $s_i-1$ is divisible by $q_1q_2q_3q_4r_1r_2r_3r_4$. (By Dirichlet's theorem such $s_i$ exist.)  Then let $G''=V_1\times V_2\times V_3\times V_4$ with irreducible $GF(s_i)$-modules $V_i$, where the $V_i$ may be constructed as follows:
\begin{enumerate}
\item Let $V_1$ be the faithful irreducible $GF(s_1)Q_1R_1$-module (existing by Lemma 2.1), on which $R_1$ acts fixed point freely. Let $Q_2$, $Q_3$, and $Q_4$ act fixed point freely on $V_1$ by multiplication with elements of $GF(s_1)$ (i.e. $Q_2Q_3Q_4$ is represented by multiples of the identity matrix), and $R_2R_3R_4$ may act trivially on $V_1$. So we have

$C_{G/G''}(V_1)\cong R_2\times R_3\times R_4$, and

$G/C_G(V_1)\cong Q_1R_1\times Q_2\times Q_3\times Q_4$.

\item As in (1) let $V_2$ be the $GF(s_2)Q_2R_2$-module, on which $R_2$ acts fixed point freely, and we demand

$C_{G/G''}(V_2)\cong R_1\times R_3\times R_4$, and

$G/C_G(V_2)\cong Q_2R_2\times Q_1\times Q_3\times Q_4$, where $Q_1, Q_3$ and $Q_4$ act fixed point freely on $V_2$ by multiplication with elements of $GF(s_2)$.

\item Let $V_3$ be the  faithful, irreducible $GF(s_3)Q_1Q_2Q_3R_1R_2R_3$-module, on which $R_1R_2R_3=F(Q_1Q_2Q_3R_1R_2R_3)$ acts fixed point freely. Let $Q_4$ act fixed point freely on $V_3$ by multiplication with elements of $GF(s_3)$, and let $R_4$  act trivially on $V_3$. Then

$C_{G/G''}(V_3)\cong R_4$, and

$G/C_G(V_3)\cong Q_1Q_2Q_3R_1R_2R_3\times Q_4$.

\item As in (3) let $V_4$ be the $GF(s_4)Q_1Q_2Q_4R_1R_2R_4$-module, so that $R_1R_2R_4$ acts fixed point freely on it, and we demand

$C_{G/G''}(V_4)\cong R_3$, and

$G/C_G(V_3)\cong Q_1Q_2Q_4R_1R_2R_4\times Q_3$, where $Q_3$ acts fixed point freely on $V_4$ by multiplication with elements of $GF(s_4)$.
\end{enumerate}

\begin{proposition}
Let $G$ be the group constructed in Example 2.1. Then $|\sigma^c(G)|=4$.
\end{proposition}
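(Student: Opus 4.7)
The plan is to show both $\sigma^c(G)\ge 4$ and $\sigma^c(G)\le 4$; since $|\pi(G)|=12$, the upper bound is the substantive statement.

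For the lower bound I would exhibit a single character whose codegree has four distinct prime divisors. Let $N:=G''\cdot\widetilde{R_2R_3R_4}$, where $\widetilde{R_2R_3R_4}$ is any lift to $G$ of the characteristic subgroup $R_2R_3R_4\le G'/G''$; then $N\trianglelefteq G$. The prescribed action table forces $Q_2,Q_3,Q_4$ to act trivially on $R_1$, so
\[
G/N\;\cong\;(Q_1\ltimes R_1)\times Q_2\times Q_3\times Q_4.
\]
Taking $\psi\in\mathrm{Irr}(Q_1\ltimes R_1)$ faithful of degree $q_1$ (induced from any nontrivial character of $R_1$) and nontrivial linear $\lambda_i\in\mathrm{Irr}(Q_i)$ for $i=2,3,4$, the tensor $\chi:=\psi\otimes\lambda_2\otimes\lambda_3\otimes\lambda_4$ is faithful on $G/N$, so $\ker\chi=N$ in $G$ and
\[
\cod(\chi)\;=\;\frac{|G/N|}{\chi(1)}\;=\;\frac{q_1q_2q_3q_4r_1}{q_1}\;=\;q_2q_3q_4r_1,
\]
which has four distinct prime divisors.

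For the upper bound I would show $|\pi(\cod(\chi))|\le 4$ for every $\chi\in\mathrm{Irr}(G)$ by Clifford theory along the normal series $1\trianglelefteq G''\trianglelefteq G'\trianglelefteq G$. Given $\chi$, pick a constituent $\lambda=\lambda_1\lambda_2\lambda_3\lambda_4\in\mathrm{Irr}(G'')$ of $\chi|_{G''}$ (with $\lambda_i\in\mathrm{Irr}(V_i)$) and set $I:=\{i:\lambda_i\ne 1\}$. When $I=\emptyset$, $\chi$ descends to the metabelian group $G/G''$; a second Clifford step applied to the abelian subgroup $G'/G''$ yields $\mu=\prod\mu_j$ with inertia $T/G''=\mathrm{Stab}_Q(\mu)\cdot (G'/G'')$, and Gallagher's theorem parameterizes $\mathrm{Irr}(G|\mu)$ by a linear character $\beta\in\mathrm{Irr}(T/G')$. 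A direct kernel computation then yields $\pi(\cod(\chi))\subseteq\{r_j:\mu_j\ne 1\}\cup\{q_i:\beta|_{Q_i}\ne 1\}$, and running through the $16$ possible sets $J=\{j:\mu_j\ne 1\}$ against the action table shows $|J|+|\pi(\mathrm{Stab}_Q(\mu))|\le 4$ in every case, with equality attained both at $J=\emptyset$ (giving $\cod(\chi)=q_1q_2q_3q_4$) and at $J=\{1,2,3,4\}$ (giving $\cod(\chi)=r_1r_2r_3r_4$). When $I\ne\emptyset$, the irreducibility of each $V_i$ as a $G$-module forces $V_i$ to inject fully into $G/\ker\chi$ for $i\in I$; combined with the fpf action of $R_i=F(G/C_G(V_i))$ on $V_i$ and the fpf scalar actions of the cross-$V_i$ subgroups coming from the construction, a character-fixed-point analysis (exploiting that every eigenvalue of $R_1R_2R_3R_4$ on a faithful $V_i$ has all components nontrivial) sharply restricts the stabilizers of nontrivial $\lambda_i$'s. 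Intersecting the resulting inertias across $i\in I$ and invoking Gallagher once more gives $\pi(\cod(\chi))\subseteq\{s_i:i\in I\}\cup(\text{few }r\text{'s and }q\text{'s})$, and a subset-by-subset verification confirms the count is at most $4$.

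The main technical obstacle is the bookkeeping in the case $I\ne\emptyset$: when several $\lambda_i$ are simultaneously nontrivial, the intersection of their inertias in $G/G''$ must be computed precisely against the action table, and stabilizers within $Q_1Q_2Q_3Q_4$ of specific characters $\lambda_i\in\widehat V_i$ can be subtle because stabilizers of $R$-eigenvalues on $V_i$ are not the same as stabilizers of characters of $V_i$ as an abelian group. The guiding principle throughout is uniform, however: every subgroup acting fixed-point-freely at a given Clifford step contributes its prime to $\chi(1)$ rather than to $\cod(\chi)$, and this is precisely what the fpf action pattern of Example~2.1 is designed to accomplish, forcing $\sigma^c(G)=4$.
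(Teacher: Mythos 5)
Your lower bound is fine (and is a legitimate alternative to the paper's, which instead exhibits a character with $\pi(\mathrm{cod}(\chi))=\{s_1,s_2,s_3,s_4\}$ lying over a $\lambda$ with all $\lambda_i\neq 1$), and your case $I=\emptyset$ count is correct: there $\mathrm{cod}(\chi)$ divides $|\mathrm{Stab}_Q(\mu)|\prod_{j\in J}r_j$ and the sixteen choices of $J$ all give at most four primes. The overall strategy (Clifford analysis relative to $G''$ and then $G'/G''$) is the same as the paper's.

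The genuine gap is in the case $I\neq\emptyset$, specifically when exactly one $\lambda_i$ is nontrivial, and your guiding principle does not close it. Take $\lambda_4\neq 1$, $\lambda_1=\lambda_2=\lambda_3=1$. The fpf actions of $R_1R_2R_4$ and of $Q_3$ (scalars) on $V_4$ do push $r_1,r_2,r_4,s_1,s_2,s_3,q_3$ into $\chi(1)$ or the kernel, but $R_3$ acts \emph{trivially} on $V_4$ and $Q_1,Q_2,Q_4$ may stabilize $\lambda_4$, so the inertia group can be as large as $F(G)R_3Q_1Q_2Q_4$ and the naive inertia-based bound leaves \emph{five} candidate primes $\{s_4,r_3,q_1,q_2,q_4\}$; your principle that "fpf subgroups contribute to $\chi(1)$" says nothing about $r_3$ here. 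The paper needs an extra argument at exactly this point: either the inertia group $T$ is proper in $F(G)R_3Q_1Q_2Q_4$ or the Gallagher character $\beta$ is nonlinear (each of which removes a prime), or else $T=F(G)R_3Q_1Q_2Q_4$ with $\theta\beta$ linear, in which case $R_3=[R_3,Q_1]\leq T'\leq\ker(\theta\beta)$, and since $V_1V_2V_3R_3\trianglelefteq G$ this forces $V_1V_2V_3R_3\leq\ker(\chi)$, eliminating $r_3$ and giving $\pi(\mathrm{cod}(\chi))\subseteq\{s_4,q_1,q_2,q_4\}$. The same issue arises for each of the four "exactly one $\lambda_i\neq 1$" subcases (e.g.\ only $\lambda_1\neq 1$ threatens $\{s_1,r_2,r_3,r_4,q_1\}$), and in each one a commutator-into-kernel argument of this type is required. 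Your proposal flags "bookkeeping" and subtle stabilizers but supplies no mechanism that removes the fifth prime, so as written the upper bound is not established.
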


\begin{proof}

It can be checked that $F(G)=V_1V_2V_3V_4$ and $F_2(G)/F(G)=F(G/F(G))=R_1R_2R_3R_4F(G)/F(G)$. Next we consider prime divisors of $\mathrm{cod}(\chi)$ case by case for any $\chi\in \mathrm{Irr}(G)$.\\

Case (a). $G''\not\leq \mathrm{ker}(\chi)$.

Choose $\lambda\in \mathrm{Irr}(\chi|_{F(G)})$. We may assume $\lambda=\lambda_1\cdot\lambda_2\cdot\lambda_3\cdot\lambda_4$, where $\lambda_i\in \mathrm{Irr}(V_i)$.\\

Subcase (a1). Suppose that $\lambda_i\neq 1$, $i=1,2,3,4$.

It can be checked that $I_G(\lambda)=F(G)$ and $\chi(1)=|G/F(G)|$. Thus $\pi(\mathrm{cod}(\chi))=\{s_1, s_2, s_3, s_4\}$.\\

Subcase (a2). Suppose that one of the four characters $\lambda_i$ is trivial and the other three characters are nontrivial.

If $\lambda_4=1$, it can be checked that  $I_G(\lambda)=F(G)R_4$ and $\chi(1)=|G/(F(G)R_4)|$. Since $V_4\leq \mathrm{ker} (\lambda)$,  we have  $V_4\leq \mathrm{ker} (\chi)$ and $\pi(\mathrm{cod}(\chi))\subseteq\{s_1, s_2, s_3, r_4\}$. Similarly, we can check for subcases of $\lambda_i=1$, $i=1,2,3$ and also obtain that $|\pi(\mathrm{cod}(\chi))|\leq 4$.\\

Subcase (a3). Suppose that two of the four characters $\lambda_i$ are trivial and the other two characters are nontrivial.

If $\lambda_3$ and $\lambda_4$ are trivial characters, it can be checked that $I_G(\lambda)\leq F(G)(R_3R_4)$ and $|G|/\chi(1)\mid|F(G)R_3R_4|$. Since $V_3V_4\leq \mathrm{ker} (\lambda)$, we have $V_3V_4\leq \mathrm{ker} (\chi)$ and $\pi(\mathrm{cod}(\chi))\subseteq\{s_1, s_2, r_3, r_4\}$. Similarly, we can check for other subcases and also obtain that $|\pi(\mathrm{cod}(\chi))|\leq 4$.\\

Subcase (a4). Suppose that three of the four characters $\lambda_i$ are trivial and the remained character is nontrivial.

If $\lambda_4\neq 1$, then $T=I_G(\lambda)\leq F(G)R_3(Q_1Q_2Q_4)$ and $\lambda$ extends to $\theta\in \mathrm{Irr}(T)$.  By Clifford's theorem, there exists $\beta\in \mathrm{Irr}(T/F(G))$ such that $\chi=(\theta\cdot\beta)^G$. Since $V_1V_2V_3\leq \mathrm{ker} (\chi)$, $$\mathrm{cod}(\chi)=\frac{|G|}{\mathrm{ker}(\chi)\chi(1)}=\frac{|G|}{|\mathrm{ker}(\chi)||G:T|\beta(1)}=
\frac{|T|}{|\mathrm{ker}(\chi)|\beta(1)}\mid |V_4R_3(Q_1Q_2Q_4)|.$$ If $T\not\leq F(G)R_3(Q_1Q_2Q_4)$ or $\beta(1)>1$, then $|\pi(\mathrm{cod}(\chi))|\leq 4$. Next we assume $T= F(G)R_3(Q_1Q_2Q_4)$
and $\beta(1)=1$. Since $R_3=[R_3,Q_1]\leq T'\leq \mathrm{ker}(\theta\cdot\beta)$ and $V_1V_2V_3R_3$ is a normal subgroup of $G$, we have $V_1V_2V_3R_3\leq \mathrm{ker}(\theta\cdot\beta)$ and $V_1V_2V_3R_3\leq \mathrm{ker} (\chi)$. Therefore
$\pi(\mathrm{cod}(\chi))\subseteq\{s_4, q_1, q_2, q_4\}$.\\

Case (b).  $G''\leq \mathrm{ker}(\chi)$.

Choose $\lambda\in \mathrm{Irr}(\chi|_{F_2(G)})$. We may assume $\lambda=\lambda_1\cdot\lambda_2\cdot\lambda_3\cdot\lambda_4$, where $\lambda_i\in \mathrm{Irr}(R_i)$. All cases can be handled with the same method as before and we take the following case as an example. Suppose that  $\lambda_1, \lambda_2$ are nontrivial characters and $\lambda_3, \lambda_4$ are trivial, then $I_G(\lambda)=F_2(G)Q_3Q_4$ and $\chi(1)=q_1q_2$. Since $F(G)R_3R_4\leq \mathrm{ker}(\chi)$, we have $\pi(\mathrm{cod}(\chi))\subseteq\{r_1, r_2, q_3, q_4\}$.
\end{proof}

By the same method, we can obtain the following result.

\begin{proposition}
Let $H=Q_1Q_2R_1R_2V_1$ and $K=Q_2Q_3Q_4R_2R_4V_1V_3V_4$, where $Q_i, R_j, V_k$ are from Example 2.1, then $|\sigma^c(H)|=2$ and $|\sigma^c(K)|=3.$
\end{proposition}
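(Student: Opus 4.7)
The plan is to mimic the proof of Proposition 2.1 applied to the subgroups $H$ and $K$, reading off the action data from Example 2.1. For each subgroup I would first identify the Fitting series, then split on $\chi\in\mathrm{Irr}(H)$ (resp.\ $\mathrm{Irr}(K)$) according to which layer of the Fitting series lies in $\ker(\chi)$, and in each subcase bound $|\pi(\cod(\chi))|$ by a Clifford-theoretic inertia computation.

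For $H=Q_1Q_2R_1R_2V_1$, the fact that $R_2$ acts trivially on $V_1$ and commutes with $R_1$, together with the Frobenius actions of $Q_1$ on $R_1$ and $Q_2$ on $R_2$, gives $F(H)=V_1\times R_2$ with $F_2(H)/F(H)\cong R_1\times Q_2$. If $V_1\not\leq\ker(\chi)$, pick $\lambda=\lambda_1\lambda_2\in\mathrm{Irr}(\chi|_{F(H)})$ with $\lambda_1\ne 1$; since $Q_1,Q_2,R_1$ all move every nontrivial character of $V_1$, one gets $I_H(\lambda_1)=V_1R_2$ and hence $\pi(\cod(\chi))\subseteq\{s_1,r_2\}$. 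If $V_1\leq\ker(\chi)$, then $\chi$ factors through $H/V_1\cong Q_1R_1\times Q_2R_2$, and a direct inspection of the four possibilities for $\lambda\in\mathrm{Irr}(R_1\times R_2)$ again yields $|\pi(\cod(\chi))|\leq 2$. The bound is sharp: any $\chi$ induced from a nontrivial $\lambda_1\lambda_2$ on $F(H)$ has $\cod(\chi)=s_1^{\dim V_1}r_2$, giving $\sigma^c(H)=2$.

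For $K=Q_2Q_3Q_4R_2R_4V_1V_3V_4$ the same strategy applies with a longer case split. The key structural facts are $F(K)=V_1V_3V_4$ (because $R_2$ acts fixed-point-freely on both $V_3$ and $V_4$ and $R_4$ acts fixed-point-freely on $V_4$) and that $Q_3$ is central in $K/F(K)=Q_2Q_3Q_4R_2R_4$, so $F_2(K)/F(K)\cong Q_3\times R_2\times R_4$. When $F(K)\not\leq\ker(\chi)$, pick $\lambda=\lambda_1\lambda_3\lambda_4\in\mathrm{Irr}(F(K))$ and compute inertia subgroups such as $I_K(\lambda_1)=V_1V_3V_4R_2R_4$, $I_K(\lambda_3)=V_1V_3V_4R_4$, and $I_K(\lambda_4)=V_1V_3V_4$; the eight subcases on the trivial/nontrivial pattern of $(\lambda_1,\lambda_3,\lambda_4)$ each give $|\pi(\cod(\chi))|\leq 3$. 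When $F(K)\leq\ker(\chi)$ the problem reduces to characters of the five-prime quotient $K/F(K)$ and is handled by the parallel analysis on $\mu_3\lambda_2\lambda_4\in\mathrm{Irr}(Q_3\times R_2\times R_4)$. Any $\chi$ induced from a nontrivial $\lambda_1\lambda_3\lambda_4$ realises $\cod(\chi)=s_1^{\dim V_1}s_3^{\dim V_3}s_4^{\dim V_4}$, so $\sigma^c(K)=3$.

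The principal obstacle is bookkeeping: one must tabulate, for each surviving $V_i$, which cyclic factors of $K$ (resp.\ $H$) still act and whether the action is trivial, scalar, or fixed-point-free. The slightly subtle point is that by Lemma 2.1 the fixed-point-free action of the Fitting subgroup $R_1R_2R_3$ on $V_3$ (resp.\ $R_1R_2R_4$ on $V_4$) descends to every nontrivial element, so $R_2$ and $R_4$ retain fixed-point-free actions on the relevant $V_i$ even after their companions $R_1,R_3$ are removed from $K$. Once this action table is recorded, every subcase is a direct codegree computation following the template of Proposition 2.1.
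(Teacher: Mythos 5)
The paper offers no written-out argument for this proposition (it is dispatched with ``by the same method''), so the only meaningful comparison is with the method of Proposition 2.1 — and your sketch deviates from that method exactly where it is delicate. Your inertia computations assume that the $Q_i$ move every nontrivial character of the modules $V_j$ (``$Q_1,Q_2,R_1$ all move every nontrivial character of $V_1$'', $I_K(\lambda_3)=V_1V_3V_4R_4$, $I_K(\lambda_4)=V_1V_3V_4$, etc.). That is false: Lemma 2.1 only makes the Fitting part ($R_1$ on $V_1$, $R_1R_2R_3$ on $V_3$, $R_1R_2R_4$ on $V_4$) act fixed point freely. Since $s_1\equiv 1 \pmod{|G|}$, every faithful irreducible $GF(s_1)Q_1R_1$-module is induced from a nontrivial character of $R_1$, so $V_1$ restricted to $Q_1$ is the regular module; hence $C_{V_1}(Q_1)\neq 0$ and, by coprimality, there are nontrivial $\lambda_1\in\mathrm{Irr}(V_1)$ fixed by $Q_1$ (a nonabelian group of order $q_1r_1$ can never act fixed point freely). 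So $I_H(\lambda_1)$ can be $V_1R_2Q_1$ rather than $V_1R_2$, and likewise the inertia groups in $K$ can pick up parts of $Q_2Q_3$ and $Q_2Q_4$. This is precisely the issue the paper's proof of Proposition 2.1 confronts in subcase (a4), where $T\le F(G)R_3(Q_1Q_2Q_4)$ and the extra step $R_3=[R_3,Q_1]\le T'\le \ker(\theta\cdot\beta)$ is needed; your proposed ``action table'' is missing exactly the column recording the fixed points of the $Q_i$ on the $V_j$.

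The omission is not cosmetic. Take $\lambda_1\neq 1$ fixed by $Q_1$ and $\lambda_2\neq 1$ on $R_2$: then $I_H(\lambda_1\lambda_2)=V_1R_2Q_1$, the character $\lambda_1\lambda_2$ extends to a linear $\theta$ of the inertia group (cyclic quotient), and $\chi=\theta^H$ is irreducible of degree $r_1q_2$ with $\ker\chi=1$ (both minimal normal subgroups $V_1$ and $R_2$ are faithfully involved), so $\cod(\chi)=q_1r_2s_1^{\dim V_1}$ — three distinct primes, contradicting your claimed containment $\pi(\cod(\chi))\subseteq\{s_1,r_2\}$ in that case. The kernel trick of subcase (a4) does not rescue this, because $Q_2\not\le I_H(\lambda)$, so $R_2=[R_2,Q_2]$ is not a commutator formed inside the inertia group. (Equivalently, $Q_1\times R_2\times C_{V_1}(Q_1)$ is abelian of order divisible by $q_1r_2s_1$, and by the Qian--Isaacs result quoted in Section 3 some codegree of $H$ is then divisible by all three primes.) So your case analysis does not establish $\sigma^c(H)=2$; it must at minimum be redone along the lines of subcase (a4), and the computation above indicates that for $H$ read off literally from Example 2.1 the bound $2$ cannot emerge from this route at all — before trusting the bookkeeping for $H$ and $K$, you need to pin down how the $Q_i$-fixed points on the $V_j$ are to be handled (or whether a different choice of module/subgroup is intended), since that is the heart of the matter, not the fixed-point-freeness of $R_2$ and $R_4$ that you flag as the subtle point.
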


By the previous examples and the main results of ~\cite{k,Keller3}, as well as the main result of ~\cite{i}, Theorem 1.1 can be easily checked. Note that all those upper bounds are the best possible.

\section{The codegree prime graphs of solvable groups}

The following three results are about the prime graphs (element order version) of solvable groups.

\begin{proposition}[Theorem 2 of \cite{gk}]\label{solvableprime}
An unlabeled graph $F$ is isomorphic to the prime graph of some solvable group if and only if its
complement $\overline{F}$ is 3-colorable and triangle-free.
\end{proposition}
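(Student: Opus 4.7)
The plan is to prove the biconditional in two directions, drawing on the Fitting series of solvable groups and Frobenius-style constructions in the spirit of Example~2.1 and Lemma~2.1.

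For necessity, suppose $G$ is solvable and let $F$ be its prime graph. To see that $\overline{F}$ is triangle-free, I would take any three primes $p,q,r \in \pi(G)$, pass to a Hall $\{p,q,r\}$-subgroup (available by Hall's theorem for solvable groups), and invoke the classical fact that a solvable $\{p,q,r\}$-group always contains an element whose order is divisible by the product of two of the three primes; this forces an edge among $p,q,r$ in $F$. For 3-colorability of $\overline{F}$, I would use the Fitting series $1 \unlhd F(G) \unlhd F_2(G) \unlhd \cdots$, assigning each prime $p \in \pi(G)$ a color according to the first Fitting layer that contains a $p$-element. Primes receiving the same color lie in a common nilpotent section and hence commute, giving a clique in $F$. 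The delicate step is showing that three colors always suffice; this would be handled by induction on the Fitting height together with a careful analysis of how successive sections act on one another, essentially reducing to a three-prime configuration and reusing the triangle-free input above.

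For sufficiency, fix a 3-coloring of $\overline{F}$ with color classes $\pi_1,\pi_2,\pi_3$, each a clique in $F$. The plan is to construct a solvable group of the form
\[
G = A_1 \rtimes (A_2 \rtimes A_3),
\]
where $A_i$ is an elementary abelian group whose prime divisors are exactly $\pi_i$, with the semidirect actions chosen so that each non-edge $\{p,q\}$ of $F$ between distinct color classes is realized by a fixed-point-free action of one cyclic factor on the other, while each edge is realized by commuting actions. This is precisely the Frobenius-style recipe of Example~2.1, and Lemma~2.1 supplies, via Dirichlet's theorem, the auxiliary primes and faithful irreducible modules needed to realize each fixed-point-free action. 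Primes inside the same color class form a Hall subgroup that is (up to nilpotent extension) abelian, giving automatic adjacency in $F$.

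The main obstacle is confirming that the prescribed collection of fixed-point-free and trivial actions is simultaneously realizable. Here the triangle-freeness of $\overline{F}$ is doing the essential work: a triangle in $\overline{F}$ would demand three pairwise fixed-point-free actions among three primes, which is exactly what the three-prime restriction ruled out in the necessity direction. Combined with 3-colorability, which ensures the module-theoretic layers $A_1,A_2,A_3$ can be stacked without conflict, this combinatorial condition is precisely what makes the construction consistent and the equivalence tight.
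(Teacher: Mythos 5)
This statement is not proved in the paper at all: it is quoted verbatim as Theorem~2 of \cite{gk}, so the only ``proof'' here is a citation, and your attempt has to stand on its own against the argument actually given in \cite{gk}. Your sufficiency direction is essentially the right idea and matches the construction in \cite{gk} (their Theorem~2.8): a Fitting-height-three tower $A_1\rtimes(A_2\rtimes A_3)$ built on the three colour classes, with primes chosen by Dirichlet's theorem so that each non-edge between classes is realized by a fixed-point-free (Frobenius) action and each edge by a trivial action on the relevant factor. But even there you assert rather than verify the crucial consistency check --- that the prime graph of the constructed group is \emph{exactly} $F$, in particular that a vertex in class~1 and a vertex in class~3 joined in $F$ really do produce an element of order divisible by both primes despite the intermediate class-2 actions, and that no unwanted adjacencies appear; invoking the necessity direction (``a triangle in $\overline{F}$ would demand three pairwise fixed-point-free actions, which necessity rules out'') is not an argument, since necessity is a statement about solvable groups that already exist, not about which systems of actions can be realized. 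Triangle-freeness enters the construction through a concrete case analysis (the 2-Frobenius phenomenon forcing the end primes of a path in $\overline F$ to be adjacent), which your sketch does not carry out.

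The more serious gap is in the necessity direction. Triangle-freeness of $\overline F$ is indeed classical (a Hall $\{p,q,r\}$-subgroup of a solvable group cannot have an edgeless prime graph, since solvable groups have at most two prime-graph components), so citing that is fine. But your proposed proof of $3$-colorability --- colour each prime by the first Fitting layer in which it appears --- produces as many colour classes as the Fitting height of $G$, which is unbounded, and the promised ``induction on the Fitting height with a careful analysis'' is exactly the missing theorem, not a proof of it; there is no evident reduction from an arbitrary Fitting-layer colouring to three colours, and reusing triangle-freeness does not supply one (triangle-free graphs can have large chromatic number, so triangle-freeness of $\overline F$ alone cannot yield $3$-colorability). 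The actual argument in \cite{gk} is quite different: one orients each edge $\{p,q\}$ of $\overline F$ according to which prime acts Frobenius-like on the other (the ``Frobenius digraph''), proves that this orientation admits no directed path with three edges, and then concludes $3$-colorability from the Gallai--Hasse--Roy--Vitaver theorem. Without that (or an equivalent) ingredient, your necessity direction does not go through.
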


\begin{proposition}[Theorem 3.1 of \cite{fh}]\label{metanilpotentprime}
Let $\mathcal{C}$ be any class of groups such that

\centerline{\{$G$: $G$ has square-free order\}$\subseteq \mathcal{C}\subseteq$ \{$G$: $G$ metanilpotent\}.}

\noindent Then $F$ is isomorphic to the prime graph of some $G\in \mathcal{C}$ if and only if $\overline{F}$ is bipartite.

\end{proposition}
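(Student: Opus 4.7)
The plan is to handle the two directions separately; since every such class $\mathcal{C}$ is sandwiched between square-free-order groups and metanilpotent groups, it suffices to show (i) every metanilpotent $G$ has $\overline{F}$ bipartite, and (ii) every graph $F$ with $\overline{F}$ bipartite is realized as the prime graph of some group of square-free order. The natural bipartition is supplied by the Fitting subgroup.

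For direction (i), set $N=F(G)$, $A=\pi(N)$, and $B=\pi(G)\setminus A$. I will argue that $A$ and $B$ are each cliques in the prime graph of $G$, so all edges of $\overline{F}$ run between $A$ and $B$. For $p,q\in A$, the Sylow $p$- and $q$-subgroups of the nilpotent group $N$ are normal in $N$ and commute, producing an element of order $pq$ in $N$. For $p,q\in B$, neither prime divides $|N|$, so a Sylow $p$-subgroup $P$ and Sylow $q$-subgroup $Q$ of $G$ inject into the nilpotent quotient $G/N$, where their images are commuting normal Sylow subgroups. Picking $x\in P$ and $y\in Q$ of orders $p,q$, the intersection $\langle x,y\rangle\cap N$ is a normal subgroup of $\langle x,y\rangle$ whose order divides $|N|$ and is thus coprime to $pq$; Schur--Zassenhaus then yields a complement of order $pq$ which projects isomorphically onto the cyclic group $\langle x,y\rangle N/N\cong C_{pq}$, and is therefore itself cyclic of order $pq$.

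For direction (ii), let $A,B$ be a bipartition of $\overline{F}$. I will produce a Frobenius-style group of square-free order. Choose distinct primes $q_j$ ($j\in B$) arbitrarily, then by Dirichlet choose, for each $i\in A$, a prime $p_i$ (distinct from everything chosen so far) with $p_i\equiv 1\pmod{q_j}$ whenever $\{i,j\}\in\overline{F}$. Let $N=\prod_{i\in A}C_{p_i}$, $H=\prod_{j\in B}C_{q_j}$, and form $G=N\rtimes H$ with $C_{q_j}$ acting fixed-point-freely on $C_{p_i}$ precisely when $\{i,j\}\in\overline{F}$, and trivially otherwise. Then $G$ has square-free order, the Sylow $p_i$-subgroup of $G$ is the normal factor $C_{p_i}$, and the Sylow $q_j$-subgroup is the factor $C_{q_j}$ of $H$. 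To verify the prime graph equals $F$: abelianness of $N$ handles $A$-$A$ edges, abelianness of $H$ handles $B$-$B$ edges, trivial actions handle edges $\{i,j\}\in F$ with $i\in A,j\in B$ (commuting Sylows produce an element of order $p_iq_j$), while fixed-point-free actions forbid such elements for $\{i,j\}\in\overline{F}$ — after using Sylow conjugacy to move any putative $q_j$-part into $C_{q_j}$ and noting that the commuting $p_i$-part remains in the normal $C_{p_i}$, the required centralization violates fixed-point-freeness.

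The main obstacle is lifting the $pq$-structure from $G/N$ to $G$ in direction (i); commutativity of Sylow subgroups modulo $N$ does not directly yield an element of order $pq$ in $G$, and the Schur--Zassenhaus step together with the cyclicity argument for the complement is the critical move. Direction (ii) is then largely a matter of careful bookkeeping with the Dirichlet-chosen primes, together with the observation that since the constructed $G$ has square-free order it automatically belongs to every admissible $\mathcal{C}$.
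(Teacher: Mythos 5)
The paper does not prove this proposition at all: it is quoted verbatim as Theorem 3.1 of \cite{fh}, so there is no in-paper argument to compare against. Your blind proof is correct and follows the route one would expect (and essentially the one used in \cite{fh}): for the forward direction, $\pi(F(G))$ and its complement are each cliques in the prime graph of a metanilpotent $G$ -- your Schur--Zassenhaus lifting of the cyclic $pq$-image in the nilpotent quotient $G/F(G)$ is valid (one could equally invoke a Hall $\{p,q\}$-subgroup, since $G$ is solvable), though you should state explicitly that metanilpotency gives $G/F(G)$ nilpotent; for the converse, your Dirichlet-based construction $N\rtimes H$ of square-free order, with fixed-point-free actions exactly on the non-edges, realizes $F$ and lies in every admissible $\mathcal{C}$, and your Sylow-conjugation argument correctly rules out elements of order $p_iq_j$ for $\{i,j\}\in\overline{F}$ because the Sylow $p_i$-subgroup $C_{p_i}$ is normal and any nontrivial element of $C_{q_j}$ acts without nontrivial fixed points on it. I see no gap.
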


\begin{proposition}[Theorem 4.3 of \cite{fh}]
 $F$ is isomorphic to the prime graph of a solvable group of $n^{th}$-power-free order if and only
if $\overline{F}$ satisfies the following conditions.

(1) Triangle-free.

(2) There exists a $3$-coloring of $\overline{F}$ by Red, Green, and Blue and a way to label each red vertex by a distinct prime number such that for any $\pi$ a subset of those primes whose product is at least $n$, we have that in the canonical orientation, no blue vertex is simultaneously  the end of directed $2$-paths starting from each of the red vertices in $\pi$.

\end{proposition}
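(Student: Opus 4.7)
The plan is to treat this biconditional as a refinement of Propositions 3.1 and 3.2, proving necessity and sufficiency separately, with the sufficiency proof closely modeled on the construction in Example 2.1.

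For the forward direction, let $G$ be a solvable group of $n^{th}$-power-free order and let $F$ be its prime graph. Triangle-freeness of $\overline{F}$ is immediate from Proposition 3.1. For condition (2), I would use the Fitting series $1 \unlhd F(G) \unlhd F_2(G) \unlhd G$ to define the three color classes (primes dividing $|F(G)|$, primes dividing $|F_2(G)/F(G)|$, and primes dividing $|G/F_2(G)|$), and orient each edge of $\overline{F}$ from the lower to the higher Fitting stage; this is the canonical orientation. Label each red vertex by the corresponding prime. If a blue vertex $b$ were the terminus of directed $2$-paths from a set $\pi$ of red primes with $\prod_{p\in\pi} p \geq n$, then by Clifford analysis of the chief series, $G$ would contain a section in which the Sylow $b$-subgroup carries a faithful action of (essentially) the product of cyclic groups coming from the reds in $\pi$, forcing its order to be at least $b^{\prod_{p\in\pi} p}$ and contradicting $n^{th}$-power-free-ness.

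For the backward direction, I would construct $G$ directly along the template of Example 2.1. Using the given 3-coloring and canonical orientation, place each red prime into an abelian Fitting subgroup, each green prime into the $F_2/F$ layer, and each blue prime into the top layer. Edges of $\overline{F}$ dictate which groups should act fixed-point-freely on which, exactly as arranged in the four-layer construction of Example 2.1. For each blue prime $b$ and each minimal configuration of red primes with directed $2$-paths terminating at $b$, invoke Lemma 2.1 to produce a faithful irreducible $GF(b)$-module of the appropriate dimension, and make the remaining primes act diagonally by scalars. Condition (2) is precisely what guarantees that the product of primes governing each such $b$-module is strictly less than $n$, so the Sylow $b$-subgroup of the constructed $G$ has order strictly less than $b^n$, keeping $|G|$ $n^{th}$-power-free.

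The main obstacle is the quantitative estimate in the forward direction: one must establish that whenever a blue prime $b$ is the terminus of directed $2$-paths from a set $\pi$ of red primes, the $b$-part of $|G|$ really must be divisible by $b^{\prod_{p\in\pi} p}$, not merely by some smaller power. This sharpness comes from Hall--Higman style lower bounds on the dimensions of faithful coprime modules combined with the fact that the actions along the Fitting series compound multiplicatively, and it is exactly what makes condition (2) both necessary and sufficient. A secondary subtlety in the backward direction is verifying that the constructed $G$ acquires no unwanted adjacencies in its prime graph; this amounts to checking that wherever $\overline{F}$ has a non-edge (i.e., $F$ has an edge), one can exhibit a cyclic section of $G$ with both primes in its order, and wherever $\overline{F}$ has an edge, the corresponding Sylow subgroups can be chosen to centralize one another, which is permitted precisely because $\overline{F}$ is triangle-free and $3$-colored.
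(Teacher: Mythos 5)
First, note that the paper does not prove this statement at all: Proposition 3.3 is quoted verbatim as Theorem 4.3 of \cite{fh}, so there is no internal proof to compare against, and your proposal has to stand on its own as a reconstruction of the cited argument. As such it has genuine gaps. The most serious one is structural: your forward direction colors the vertices by the three Fitting layers $F(G)$, $F_2(G)/F(G)$, $G/F_2(G)$ and orients edges of $\overline{F}$ upward along the Fitting series. This is not a legitimate proper $3$-coloring in general (a prime may divide several Fitting quotients, the Fitting height need not be $3$, and nothing ties this partition to the canonical orientation in the statement); the actual coloring in \cite{gk,fh} comes from the Frobenius digraph, i.e.\ from orienting a non-edge $p$--$q$ according to which prime occurs in the kernel and which in the complement of a Frobenius section. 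Worse, your assignment is inverted relative to what condition (2) requires: for the product of the \emph{red labels} to bound the order of a Sylow subgroup, the red primes must be the Frobenius complements at the top of a tower whose bottom is an elementary abelian $b$-group, so that freeness of the module for a cyclic group of order $\prod_{p\in\pi}p$ forces $b^{\prod_{p\in\pi}p}$ to divide $|G|$. With red placed inside $F(G)$ and arrows pointing up the Fitting series, the product of the red labels has no bearing on the order of any Sylow subgroup, so the claimed contradiction with $n^{th}$-power-freeness never materializes. Your backward direction even contradicts your own layering: you first put the blue primes in the top layer, then construct faithful irreducible $GF(b)$-modules for each blue prime $b$, which places blue at the bottom.

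Second, the quantitative heart of the theorem is asserted rather than proved. Even with the correct layering, one must show that if a blue vertex $b$ is the end of directed $2$-paths from red vertices labeled $r_1,\dots,r_k$ (possibly through \emph{different} green vertices), then a single elementary abelian $b$-section of $G$ is a free module for a cyclic group of order $r_1\cdots r_k$; producing one common section on which all these Frobenius actions are realized simultaneously, and invoking the freeness theorem for coprime Frobenius actions there, is exactly the content of Theorem 4.3 of \cite{fh}, and you flag it as "the main obstacle" without closing it. Finally, in the converse construction, keeping the $b$-part of $|G|$ below $b^{n}$ needs more than the product of the relevant red primes being less than $n$: one must choose the green and blue primes with congruence conditions (as in Lemma 2.1, where $|H|\mid p-1$) so that the faithful irreducible module has dimension exactly that product, rather than a dimension inflated by the multiplicative order of $b$ modulo the green primes; your sketch of "diagonal scalar actions" does not address this, nor does it verify that same-colored primes (which must be adjacent in $F$, being independent in $\overline{F}$) acquire the required elements of order $pq$ in the constructed group.
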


In \cite{q}, Qian proved that if $G$ is a solvable group and $g\in G$, then there is an irreducible character $\chi$ of $G$ such that $p$ divides $\mathrm{cod}(\chi)$ for any prime divisor $p$ of the order of $g$. A while later, Isaacs proved that the result is indeed true for any finite group in \cite{i}. In view of this result, we see that the element order prime graph of a finite group $G$ is a subgraph of its codegree prime graph. Next we show the element order prime graph and codegree prime graph  coincide for various types of solvable groups.

\begin{proposition}\label{codtoelement}
Let $G$ be a solvable group such that every minimal normal subgroup is a Sylow subgroup and $|G/F(G)|$ is square-free. For any prime pairs $(p,q)$ if $pq\mid \mathrm{cod}(\chi)$ for some
$\chi\in \mathrm{Irr}(G)$, then there exists an element $g\in G$ such that $pq\mid|g|$.
\end{proposition}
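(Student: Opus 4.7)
The plan is to analyze $\chi$ via Clifford theory applied to $F:=F(G)$, and to split on whether the primes $p,q$ lie in $\pi_1:=\pi(F)$ or $\pi_2:=\pi(G/F)$. The hypotheses force $F=\bigoplus_{r\in\pi_1}F_r$, where each $F_r$ is a minimal-normal elementary-abelian Sylow $r$-subgroup of $G$; consequently $\pi(G)=\pi_1\sqcup\pi_2$, and every $r\in\pi_2$ satisfies $v_r(|G|)=1$ (the $r$-adic valuation). From the identity $v_r(|G|)=v_r(|\ker\chi|)+v_r(\chi(1))+v_r(\mathrm{cod}(\chi))$, any prime $r\in\pi_2$ dividing $\mathrm{cod}(\chi)$ therefore satisfies $r\nmid\chi(1)$ and $r\nmid|\ker\chi|$; this observation drives the subsequent case analysis.

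If $p,q\in\pi_1$, then any nontrivial $x\in F_p$ and $y\in F_q$ commute inside the abelian $F$, yielding $xy$ of order $pq$. In the mixed case $p\in\pi_1$, $q\in\pi_2$ (the other mixed case is symmetric), I argue by contradiction: if $G$ has no element of order $pq$, then any Sylow $q$-subgroup $Q\le G$ acts fixed-point-freely on $F_p$, and hence also on $\mathrm{Irr}(F_p)\setminus\{1\}$. For $\lambda=\prod_r\lambda_r\in\mathrm{Irr}(\chi|_F)$, the minimality of $F_p$ forces $\lambda_p\ne 1$ (else $F_p\le\ker\chi$ and $p\nmid\mathrm{cod}(\chi)$); the fpf action then makes $T:=I_G(\lambda)\le I_G(\lambda_p)=F\cdot I_H(\lambda_p)$ a $q'$-group modulo $F$, where $H=G/F$. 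Hence $q\mid[G:T]$, which divides $\chi(1)=[G:T]\eta(1)$, contradicting $v_q(\chi(1))=0$.

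The delicate case is $p,q\in\pi_2$. Since $|F|$ is coprime to $pq$, $G$ has an element of order $pq$ iff $H$ does, and in the $Z$-group $H$ of square-free order the only obstruction is that one of $P,Q$ (say $P$, of order $p$) is the normal Sylow contained in $H'$ while the other acts nontrivially, making $PQ$ a Frobenius group of order $pq$. Assuming this obstruction holds and $pq\mid\mathrm{cod}(\chi)$, write $\chi=\eta^G$ with $\eta\in\mathrm{Irr}(T|\lambda)$ and $p,q\nmid\eta(1)$. Consider the normal subgroup $P^*:=FP\triangleleft G$ and the action of $T$ on the $p$-element set $\mathrm{Irr}(P^*|\lambda)$ of linear extensions of $\lambda$. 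A Sylow $q$-element $y\in T$, acting on $\mathrm{Irr}(P)$ by a nontrivial automorphism of prime order, has a unique fixed extension $\sigma_0$; choosing $P,y$ inside a common Schur--Zassenhaus complement $T_c\le H$ to $F$ in $T$ and using that $\lambda$ is $P$-invariant, a direct calculation yields $\sigma_0|_P=1$. Every non-$\sigma_0$ orbit under $\langle y\rangle$ has size $q$, so $q\nmid\eta(1)$ forces $\eta|_{P^*}$ to be supported on the $T$-orbit of $\sigma_0$, and the same $P$-invariance propagates the vanishing to every $T$-conjugate, giving $\eta(p)=\eta(1)$ for all $p\in P$, i.e.\ $P\le\ker\eta$. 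Finally, since $P\triangleleft H$ we have $N_G(P)=C_F(P)\cdot H$ and $N_T(P)=C_F(P)\cdot T_c$, so the number $N$ of cosets $gT\in G/T$ with $g^{-1}Pg\le T$ equals $|N_G(P)|/|N_T(P)|=|H|/|T_c|=[G:T]$. Every Sylow $p$-subgroup of $T$ lies in the normal closure of $P$ in $T$, hence in $\ker\eta$, so the induction formula for $\chi=\eta^G$ gives $\chi(p)=N\,\eta(1)=\chi(1)$ for every $p\in P$; thus $P\le\ker\chi$, contradicting $v_p(|\ker\chi|)=0$.

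The main obstacle is this last sub-case, where the inertia trick of the mixed case no longer applies since neither prime divides $|F|$. The essential new inputs are the unique $y$-stable extension $\sigma_0$ (shown to be trivial on $P$ via the $P$-invariance of $\lambda$) and the clean identity $N=[G:T]$ (which works precisely because $C_F(P)$ cancels and $P$ is already normal in $H$); both depend crucially on the twin hypotheses that every minimal normal subgroup is a Sylow subgroup and that $|G/F|$ is square-free.
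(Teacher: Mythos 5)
Your proof is correct, but it takes a genuinely different route from the paper's. The paper proceeds by induction on $|G|$ to reduce to faithful $\chi$, then splits according to whether $p,q$ divide $|F(G)|$, $|F_2(G)/F(G)|$ or $|G/F_2(G)|$; in its hardest configuration ($p\mid |F_2(G)/F(G)|$, $q\mid |G/F_2(G)|$) it shows the situation is simply impossible: choosing a Sylow $p$-subgroup $P\le I_G(\lambda)$, every constituent of $\chi|_{F(G)P}$ is a linear extension of a conjugate of $\lambda$, so $(F(G)P)'\le\mathrm{ker}(\chi)=1$ and $P\le C_G(F(G))=F(G)$, a contradiction. You avoid the induction entirely via the identity $|G|=|\mathrm{ker}(\chi)|\,\chi(1)\,\mathrm{cod}(\chi)$ together with the fact that primes outside $\pi(F(G))$ divide $|G|$ only to the first power; your mixed case is also argued differently (fixed-point-freeness forcing $q\mid [G:T]\mid\chi(1)$, rather than the paper's direct construction of an element via the permutation isomorphism of the actions on $F(G)$ and $\mathrm{Irr}(F(G))$); and in the case $p,q\nmid |F(G)|$ you isolate the genuine obstruction (a Frobenius Hall $\{p,q\}$-subgroup of $G/F(G)$ with $P$ normal) and kill it by a finer Clifford analysis over $F(G)P$: since $q\nmid\eta(1)$, the unique $y$-stable extension $\sigma_0$ (which is trivial on $P$) must appear in $\eta|_{F(G)P}$, giving $P\le\mathrm{ker}(\eta)$ and then $P\le\mathrm{ker}(\chi)$, contradicting $p\mid\mathrm{cod}(\chi)$. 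What your approach buys is that it needs no inheritance of the hypotheses by $G/\mathrm{ker}(\chi)$ (a point the paper leaves to the reader); the cost is a longer final case. Two small repairs: the formula ``$N_G(P)=C_F(P)\cdot H$'' is ill-typed ($H$ is a quotient, not a subgroup) and unnecessary, since every $G$-conjugate of $P$ lies in $F(G)P\le T$ (as $F(G)P\unlhd G$), so all $[G:T]$ cosets contribute trivially; and the ``propagation to every $T$-conjugate'' is cleanest by observing that each $\sigma_0^t$ is a linear character of $F(G)P$ whose values on $F(G)$ are $p'$-roots of unity and which vanishes on a Sylow $p$-subgroup of $F(G)P$, hence is trivial on every $p$-element of $F(G)P$; this yields $\eta(u)=\eta(1)$ for all such $u$ in one stroke.
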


\begin{proof} We proceed by the induction on the order of $G$. If $\mathrm{ker}(\chi)$ is nontrivial, then it can be checked that every minimal normal subgroup of $G/\mathrm{ker}(\chi)$ is a Sylow subgroup and $(G/\mathrm{ker}(\chi))/F(G/\mathrm{ker}(\chi))$ is square-free. Then there exists an element $g\mathrm{ker}(\chi)\in G/\mathrm{ker}(\chi)$ such that $pq\mid|g\mathrm{ker}(\chi)|$ by induction. Therefore $pq\mid|g|$. Next we assume that  $\mathrm{ker}(\chi)=1$.

Let $F(G)=P_1\times P_2\times\cdots\times P_t$, where $P_i$ is a Sylow $p_i$-subgroup. Choose $\lambda\in \mathrm{Irr}(\chi|_{F(G)})$ and set $T=I_G(\lambda)$. Then $\chi(1)=|G:T|\theta(1)$, where $\theta\in \mathrm{Irr}(T|\lambda)$ by Clifford theory and thus $\mathrm{cod}(\chi)=|T|/\theta(1)$. If $pq\mid|F(G)|$, then there exists an element of order $pq$. If $p\mid |F(G)|$ and $(q,|F(G)|)=1$, then the order of $\lambda$ is divisible by $p$ and $q\mid |T|$. Since the actions of $G/F(G)$ on $\mathrm{Irr}(F(G))$ and $F(G)$ are permutation isomorphic (by Theorem 13.24 of ~\cite{i06}), there exists $h\in F(G)$ such that $p\mid |h|$ and $q\mid|C_G(h)|$. Hence there exists an element $g\in G$ such that $pq\mid|g|$. If  $q\mid |F(G)|$ and $(p,|F(G)|)=1$, the proposition is true similarly.
Now we can assume $pq\mid |G/F(G)|$. Let $F_2(G)/F(G)=F(G/F(G))$. Since $G/F(G)$ is square-free, $F_2(G)/F(G)$ and $G/F_2(G)$ are cyclic groups. If $pq \mid |F_2(G)/F(G)|$ or $pq \mid |G/F_2(G)|$, the proposition is true. Without loss of generality we assume that $p\mid |F_2(G)/F(G)|$ and $q \mid |G/F_2(G)|$. Choose a Sylow $p$-subgroup $P\leq T$ and set $N=F(G)P$.
Let $\alpha\in \mathrm{Irr}(\chi|_N)$ such that $\lambda\in \mathrm{Irr}(\alpha|_{F(G)})$, then $\alpha$ is an extension of $\lambda$. Since $N'\leq \mathrm{ker}(\alpha)$ and $N\unlhd G$, $N'\leq \mathrm{ker}(\chi)$ and $N'=1$. Hence $P\leq C_G(F(G))=F(G)$, a contradiction with $(p,|F(G)|)=1$.
\end{proof}

Inspired by the previous results, we obtain similar results on the prime graph of codegrees.

\begin{proposition}\label{codsolvableprime}
An unlabeled graph $F$ is isomorphic to the codegree prime graph of some solvable group if and only if its
complement $\overline{F}$ is 3-colorable and triangle-free.
\end{proposition}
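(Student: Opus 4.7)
The proposition is an iff, and the plan is to prove the two directions separately, using Propositions 3.1 and 3.4 together with the Qian--Isaacs result cited just above Proposition 3.4, which implies that the element order prime graph of any finite group is always a subgraph of its codegree prime graph.

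For necessity, let $G$ be a solvable group with $\Delta_{\cod}(G) = F$, and write $\Gamma(G)$ for the element order prime graph of $G$. The two graphs share the vertex set $\pi(G)$, and by the Qian--Isaacs result, whenever $pq \mid |g|$ for some $g \in G$ there is an irreducible character $\chi$ with $pq \mid \cod(\chi)$; hence $\Gamma(G) \subseteq \Delta_{\cod}(G) = F$, and passing to complements, $\overline{F}$ is a subgraph of $\overline{\Gamma(G)}$. Proposition 3.1 applied to the solvable group $G$ tells us that $\overline{\Gamma(G)}$ is 3-colorable and triangle-free, and both properties are inherited by subgraphs on the same vertex set, so $\overline{F}$ is 3-colorable and triangle-free.

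For sufficiency, assume $\overline{F}$ is 3-colorable and triangle-free. Proposition 3.1 yields a solvable group $G$ with $\Gamma(G) = F$, and the key step is to arrange the construction so that $G$ also satisfies the hypotheses of Proposition 3.4, namely, that every minimal normal subgroup of $G$ is a Sylow subgroup and that $|G/F(G)|$ is square-free. Once these hold, Proposition 3.4 supplies the reverse inclusion $\Delta_{\cod}(G) \subseteq \Gamma(G)$, and combined with the forward inclusion this forces $\Delta_{\cod}(G) = \Gamma(G) = F$. Concretely, starting from a 3-coloring $\pi(F) = A \sqcup B \sqcup C$ of $\overline{F}$, I would build $G$ with $F(G) = \prod_{p \in A} V_p$, each $V_p$ a faithful irreducible $\mathbb{F}_p[G/F(G)]$-module (obtained via Lemma 2.1), so that each $V_p$ is simultaneously the full Sylow $p$-subgroup and a minimal normal subgroup of $G$, and with $G/F(G)$ a direct product of cyclic groups of prime order indexed by $B \cup C$, so that $|G/F(G)|$ is square-free; the actions of these cyclic factors on the $V_p$ are then chosen to realize exactly the edges of $F$.

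The main obstacle will be verifying that this tailored construction really produces $\Gamma(G) = F$. One must check that the irreducibility of the $V_p$ forces every minimal normal subgroup of $G$ to be a full Sylow (using that minimal normals of a solvable group are elementary abelian and hence lie in $F(G)$), and that constraining $G/F(G)$ to have square-free order does not force the loss of any edge of $F$ within $B \cup C$ or across $A$ and $B \cup C$. Both points hinge on the triangle-free and 3-colorable nature of $\overline{F}$, in the same spirit as the proof of Proposition 3.1 (Gruber et al.); the bulk of the work is adapting that construction to satisfy these additional structural constraints.
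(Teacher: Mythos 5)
Your overall skeleton is the same as the paper's: necessity via the Qian--Isaacs result (element order prime graph is a subgraph of $\Delta_{\cod}(G)$) together with Proposition \ref{solvableprime}, and sufficiency by realizing $F$ as the element order prime graph of a solvable group satisfying the hypotheses of Proposition \ref{codtoelement}, so that the two graphs coincide. The necessity half of your argument is correct and essentially identical to the paper's. The difference is in sufficiency: the paper does not build a new group at all --- it simply takes the group $G$ produced by the construction in Theorem 2.8 of \cite{gk} (the proof of Proposition \ref{solvableprime}) and observes that this $G$ already has every minimal normal subgroup equal to a Sylow subgroup and $|G/F(G)|$ square-free, so Proposition \ref{codtoelement} applies directly.

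Your attempt to re-engineer the construction contains a genuine flaw. You propose $G/F(G)$ to be a direct product of cyclic groups of prime order indexed by $B\cup C$. But then for any two primes $q\in B$, $r\in C$ the quotient $G/F(G)$ has an element of order $qr$, and any preimage in $G$ has order divisible by $qr$, so $q$ and $r$ are adjacent in the element order prime graph of $G$ no matter how the action on $\prod_{p\in A}V_p$ is chosen. Since $\overline{F}$ may well contain edges between the color classes $B$ and $C$ (a proper $3$-coloring only forbids edges inside a class), your group would realize extra edges and $\Gamma(G)\neq F$ in general; choosing the actions on the $V_p$ only controls edges between $A$ and $B\cup C$. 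The point of the Gruber et al.\ construction is precisely that $G/F(G)$ is itself a (generally nonabelian) group of square-free order in which the $C$-primes act fixed-point-freely on the Hall $B$-subgroup wherever a non-edge between $B$ and $C$ must be produced; it is this three-level Fitting structure, not a direct product at the top, that realizes arbitrary admissible $F$, and it is also exactly why the hypotheses of Proposition \ref{codtoelement} ($|G/F(G)|$ square-free, minimal normal subgroups Sylow) hold, which is the only verification the paper's proof actually needs.
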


\begin{proof}
First, we assume $F$  is isomorphic to the codegree prime graph of a solvable group $G$. Let $F_1$ be the prime graph of $G$. Then $\overline{F_1}$ is 3-colorable and triangle-free by Proposition \ref{solvableprime}. Since $F_1$ is subgraph of $F$, $\overline{F}$ is a subgraph of $\overline{F_1}$. Hence $\overline{F}$ is 3-colorable and triangle-free.

Conversely, if $\overline{F}$ is 3-colorable and triangle-free, then  a solvable group $G$ can be constructed such that $F$ is isomorphic to the prime graph of $G$ (The details on  the structure of  $G$  can be found in Theorem 2.8 of \cite{gk}). It can be checked that every minimal normal subgroup of $G$ is a Sylow subgroup and $|G/F(G)|$ is square-free.
Then  $\Delta_{\cod}(G)$ is the same as its prime graph by Proposition \ref{codtoelement}.
Therefore $F$ is isomorphic to $\Delta_{\cod}(G)$.
\end{proof}

\begin{proposition}\label{codmetanilpotentprime}
Let $\mathcal{C}$ be any class of groups such that

\centerline{\{$G$: $G$ has square-free order\}$\subseteq \mathcal{C}\subseteq$ \{$G$: $G$ metanilpotent\}.}

\noindent Then $F$ is isomorphic to the codegree prime graph of some $G\in \mathcal{C}$ if and only if $\overline{F}$ is bipartite.

\end{proposition}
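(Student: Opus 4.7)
The plan is to mirror the proof of Proposition \ref{codsolvableprime}, substituting the metanilpotent element-order prime graph result (Proposition \ref{metanilpotentprime}) for its solvable counterpart. The crucial input, already exploited in the solvable case, is the Qian--Isaacs theorem which forces the element order prime graph of $G$ to be a subgraph of its codegree prime graph $\Delta_{\cod}(G)$; passing to complements reverses this inclusion.

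For the forward implication, I assume $F\cong \Delta_{\cod}(G)$ for some $G\in \mathcal{C}$. By hypothesis $G$ is metanilpotent, so Proposition \ref{metanilpotentprime} tells me that the complement $\overline{F_1}$ of the element order prime graph $F_1$ of $G$ is bipartite. Since $F_1$ is a subgraph of $F$, $\overline{F}$ is a subgraph of $\overline{F_1}$, and any subgraph of a bipartite graph is bipartite; hence $\overline{F}$ is bipartite.

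For the converse, suppose $\overline{F}$ is bipartite. Applying Proposition \ref{metanilpotentprime} to the subclass of groups of square-free order (which is contained in $\mathcal{C}$ by hypothesis), I can produce a group $G$ of square-free order whose element order prime graph is isomorphic to $F$; this $G$ automatically lies in $\mathcal{C}$. I then claim that $\Delta_{\cod}(G)$ coincides with the element order prime graph of $G$. Indeed, since $|G|$ is square-free, every Sylow subgroup has prime order, so every minimal normal subgroup of $G$ is a Sylow subgroup; moreover $|G/F(G)|$ divides $|G|$, hence is itself square-free. These are precisely the hypotheses of Proposition \ref{codtoelement}, whence the two graphs coincide and $F\cong \Delta_{\cod}(G)$ as required.

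The only subtlety lies in the converse direction: one must ensure that the group furnished by Proposition \ref{metanilpotentprime} can be chosen so that Proposition \ref{codtoelement} applies to it. This is handled for free by selecting $G$ from the smallest allowed subclass, namely groups of square-free order, which is built into the hypothesis on $\mathcal{C}$. Apart from this choice, both directions are essentially routine once the element-order analogue and Proposition \ref{codtoelement} are in hand.
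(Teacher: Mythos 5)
Your proposal is correct and follows essentially the same route as the paper: forward direction via Proposition \ref{metanilpotentprime} plus the Qian--Isaacs subgraph relation (passing to complements), and the converse by realizing $F$ as the prime graph of a square-free-order group in $\mathcal{C}$ and noting the codegree and element-order prime graphs coincide there. Your explicit verification of the hypotheses of Proposition \ref{codtoelement} just spells out what the paper asserts directly for square-free order.
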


\begin{proof}
First, we assume $F$ is isomorphic to the codegree prime graph of a solvable group $G$. Let $F_1$ be the prime graph of $G$. Then $\overline{F_1}$ is bipartite by Proposition \ref{metanilpotentprime}. Since $F_1$ is subgraph of $F$, $\overline{F}$ is a subgraph of $\overline{F_1}$. Hence $\overline{F}$ is bipartite.

Conversely, if $\overline{F}$ is 3-colorable and triangle-free, then  a solvable group $G$ of square-free order can be constructed such that $F$ is isomorphic to the prime graph of $G$ (The details on  the structure of  $G$  can be found in Theorem 3.1 of \cite{fh}).
Since $G$ has square-free order,  $\Delta_{\cod}(G)$ is the same as its prime graph. Therefore $F$ is isomorphic to $\Delta_{\cod}(G)$.
\end{proof}

\begin{proposition}
 $F$ is isomorphic to the codegree prime graph of a solvable group of $n^{th}$-power-free order if and only
if $\overline{F}$ satisfies the following conditions.

(1) Triangle-free;

(2) There exists a $3$-coloring of $\overline{F}$ by Red, Green, and Blue and a way to label each red vertex by a distinct prime number such that for any $\pi$ a subset of those primes whose product is at least $n$, we have that in the canonical orientation, no blue vertex is simultaneously  the end of directed $2$-paths starting from each of the red vertices in $\pi$.

\end{proposition}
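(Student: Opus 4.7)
The plan is to follow the two-directional template of Propositions \ref{codsolvableprime} and \ref{codmetanilpotentprime}, combining Proposition 3.3 (which handles the element order prime graph in the $n^{th}$-power-free setting) with Isaacs's theorem \cite{i} (to bridge the element order prime graph and the codegree prime graph) and Proposition \ref{codtoelement} (to force the two graphs to coincide on the groups produced by the construction in \cite{fh}).

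For necessity, suppose $F \cong \Delta_{\cod}(G)$ for some solvable group $G$ of $n^{th}$-power-free order, and let $F_{1}$ denote the element order prime graph of $G$. By Isaacs's theorem, $F_{1}$ is a spanning subgraph of $F$, so $\overline{F}$ is a spanning subgraph of $\overline{F_{1}}$. Proposition 3.3 supplies a $3$-coloring and a labeling of the red vertices of $\overline{F_{1}}$ witnessing (1) and (2). Triangle-freeness passes to subgraphs, giving (1) for $\overline{F}$. Retaining the same $3$-coloring and labeling on $\overline{F}$, every directed $2$-path in $\overline{F}$ is still a directed $2$-path in $\overline{F_{1}}$; hence no blue vertex can be the common terminus of directed $2$-paths from the red vertices of any subset $\pi$ whose product is at least $n$, which is exactly (2).

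For sufficiency, suppose $\overline{F}$ satisfies (1) and (2). Apply the construction in the proof of Theorem 4.3 of \cite{fh} to produce a solvable group $G$ of $n^{th}$-power-free order whose element order prime graph is $F$, and aim to show $\Delta_{\cod}(G) = F$. One edge containment is automatic from Isaacs's theorem. For the reverse containment, verify from the explicit construction in \cite{fh} that every minimal normal subgroup of $G$ is a Sylow subgroup of $G$ and that $|G/F(G)|$ is square-free; this places $G$ in the hypothesis of Proposition \ref{codtoelement}, so any edge of $\Delta_{\cod}(G)$ is already an edge of the element order prime graph $F_{1}$, which equals $F$.

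The principal obstacle lies in this structural verification in the sufficiency direction: the $n^{th}$-power-free hypothesis on $|G|$ by itself does not force minimal normal subgroups to be Sylow subgroups, nor $G/F(G)$ to be square-free. One has to inspect the construction of \cite{fh}, in which the minimal normal subgroups appear as faithful irreducible modules over prime fields associated with the ``blue'' vertices and the complementary action is provided by a square-free ``red-and-green'' piece, and, if the exponents coming out of that construction do not already match, refine the module sizes or multiplicities to enforce both structural properties without disturbing the realized element order prime graph. Once this verification is in place, the proof closes exactly as for Propositions \ref{codsolvableprime} and \ref{codmetanilpotentprime}.
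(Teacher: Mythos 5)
Your proposal matches the paper's argument: necessity via Isaacs's theorem plus Proposition 3.3 applied to the element order prime graph and passed down to the subgraph $\overline{F}$, and sufficiency via the construction of Theorem 4.3 of \cite{fh} (which is built from Theorem 2.8 of \cite{gk}) followed by Proposition \ref{codtoelement} to identify $\Delta_{\cod}(G)$ with the prime graph. The structural verification you flag as the main obstacle (minimal normal subgroups being Sylow, $G/F(G)$ square-free) is exactly what the paper disposes of by observing the constructed group has the same form as in Theorem 2.8 of \cite{gk}, so your route is essentially the paper's.
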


\begin{proof}
First, we assume $F$ is isomorphic to the codegree prime graph of  solvable group $G$ with $n^{th}$-power-free order. Let $F_1$ be the prime graph of $G$. Then $\overline{F_1}$ satisfies the conditions in
Proposition 3.3, so does $\overline{F}$ for $\overline{F}$ is a subgraph of $\overline{F_1}$.

Conversely, if $\overline{F}$ satisfies the conditions, then by Theorem 4.3 of \cite{fh} a solvable group $G$ can be constructed such that $F$ is isomorphic to the prime graph of $G$  which is still from  Theorem 2.8 of \cite{gk}.
Then $\Delta_{\cod}(G)$ is the same as its prime graph by Proposition \ref{codtoelement}.
Therefore $F$ is isomorphic to $\Delta_{\cod}(G)$.
\end{proof}

Similar with Corollaries 4.4-4.7 in \cite{fh}, we can obtain the following results.

\begin{corollary}
 $F$ is isomorphic to the codegree prime graph of a solvable group of $n^{th}$-power-free order if and only
if $\overline{F}$ satisfies the following conditions.

(1) Triangle-free;

(2) There exists a 3-coloring of $\overline{F}$ by Red, Green, and Blue such that we can label the red vertices by the first m primes such that

\ \ (a) The primes are less than $n$;

\ \ (b) In the canonical orientation, no blue vertex is simultaneously  the end of directed $2$-paths starting from each of the red vertices in $\pi$, where $\pi$ is any subset of the first $m$ primes whose product is at least $n$.
\end{corollary}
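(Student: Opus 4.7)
The plan is to derive this corollary by bootstrapping from the corresponding classical result for element order prime graphs, namely Corollary 4.4 of \cite{fh}, and then bridging between element order prime graphs and codegree prime graphs using the two results already established in this section: the Qian--Isaacs containment observed in the paragraph preceding Proposition \ref{codtoelement}, together with Proposition \ref{codtoelement} itself. This is the exact template used to prove Proposition 3.6 just above, with the stronger labeling condition replacing the more flexible one.

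For the forward direction, suppose $F \cong \Delta_{\cod}(G)$ for some solvable group $G$ of $n^{th}$-power-free order. Let $F_1$ denote the element order prime graph of $G$. By the Qian--Isaacs theorem, $F_1$ is a subgraph of $F$, and hence $\overline{F}$ is a (spanning) subgraph of $\overline{F_1}$. Apply Corollary 4.4 of \cite{fh} to $F_1$ to obtain a 3-coloring of $\overline{F_1}$ and a labeling of its red vertices by the first $m$ primes satisfying (a) and (b). I would then transfer each piece to $\overline{F}$: triangle-freeness is hereditary under removing edges; the chosen 3-coloring restricts to a valid 3-coloring of $\overline{F}$; condition (a) is unchanged; and condition (b) only becomes easier to verify in $\overline{F}$, because the canonical orientation of $\overline{F}$ is obtained from that of $\overline{F_1}$ by deleting directed edges, hence any directed 2-path in $\overline{F}$ is also one in $\overline{F_1}$.

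For the reverse direction, suppose $\overline{F}$ satisfies conditions (1) and (2). By Corollary 4.4 of \cite{fh}, there exists a solvable group $G$ of $n^{th}$-power-free order whose element order prime graph is $F$, built via the construction of Theorem 2.8 of \cite{gk}. I would then verify that this construction produces a group in which every minimal normal subgroup is a Sylow subgroup and $|G/F(G)|$ is square-free, exactly as was argued in the proof of Proposition \ref{codsolvableprime}. Hence Proposition \ref{codtoelement} applies and forces $\Delta_{\cod}(G)$ to coincide with the element order prime graph of $G$, which is $F$.

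The main (minor) obstacle is the bookkeeping in the forward direction: one must be careful that the \emph{labeled} 3-coloring transfers, and in particular that the red color class (and its specific prime labeling by the first $m$ primes less than $n$) is preserved when passing from $\overline{F_1}$ to the spanning subgraph $\overline{F}$. Because red vertices are still vertices of $\overline{F}$ and the labeling does not depend on edges, this reduces to checking condition (b) against a smaller family of directed 2-paths, which is automatic. Everything else is a direct appeal to the cited results.
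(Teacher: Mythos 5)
Your proposal is correct and follows essentially the same route the paper intends: the paper leaves this corollary as ``similar to Corollaries 4.4--4.7 of \cite{fh}'', and your argument is exactly the bridging template used for Proposition 3.6 (Qian--Isaacs gives that the element order prime graph is a spanning subgraph of $\Delta_{\cod}(G)$ for the forward direction, and for the converse the group from Corollary 4.4 of \cite{fh}, built via Theorem 2.8 of \cite{gk}, satisfies the hypotheses of Proposition \ref{codtoelement}, so its codegree prime graph coincides with its prime graph). Your bookkeeping remark about transferring the labeled coloring and condition (b) to the spanning subgraph $\overline{F}$ is handled correctly, since the negative condition on directed $2$-paths only gets weaker when edges are removed.
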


\begin{corollary}
 $F$ is isomorphic to the codegree prime graph of a solvable group of square-free order if and only
if $\overline{F}$ is $2$-colorable.
\end{corollary}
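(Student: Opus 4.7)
The plan is to deduce this corollary directly from Proposition \ref{codmetanilpotentprime} by taking the class $\mathcal{C}$ to be exactly the class of solvable groups of square-free order, which is the smallest admissible class in that proposition (the inclusion $\{G:G\text{ has square-free order}\}\subseteq\mathcal{C}$ is satisfied with equality, and every group of square-free order is metanilpotent, so the upper inclusion also holds). Note that ``$\overline{F}$ is $2$-colorable'' is the graph-theoretic synonym of ``$\overline{F}$ is bipartite,'' so the desired biconditional is literally the specialization of Proposition \ref{codmetanilpotentprime} to this class.

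First I would handle the forward direction. Suppose $F \cong \Delta_{\cod}(G)$ for some solvable group $G$ of square-free order. Since $G$ lies in the class $\mathcal{C}=\{G:G\text{ has square-free order}\}$, Proposition \ref{codmetanilpotentprime} immediately gives that $\overline{F}$ is bipartite, hence $2$-colorable.

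For the converse, suppose $\overline{F}$ is $2$-colorable, hence bipartite. Then the construction used in the proof of Proposition \ref{codmetanilpotentprime} (which goes through Theorem 3.1 of \cite{fh}) produces a solvable group $G$ of square-free order whose element-order prime graph is $F$. Because $|G|$ is square-free, every minimal normal subgroup of $G$ is a Sylow subgroup and $|G/F(G)|$ is square-free, so Proposition \ref{codtoelement} applies and guarantees that $\Delta_{\cod}(G)$ coincides with the element-order prime graph of $G$. Therefore $F\cong\Delta_{\cod}(G)$, completing the proof.

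The ``hard part'' is really just bookkeeping: one must verify that the group produced by the converse construction in Proposition \ref{codmetanilpotentprime} actually has square-free order (not merely lies in $\mathcal{C}$), and that this group meets the hypotheses of Proposition \ref{codtoelement} so that the codegree prime graph does not acquire extra edges beyond those of the element-order prime graph. Both conditions are built into the explicit square-free construction in \cite{fh}, so no genuinely new argument is required beyond assembling the earlier results.
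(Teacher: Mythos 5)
Your proposal is correct and follows essentially the route the paper intends (the paper derives this corollary from the earlier propositions in the spirit of Corollaries 4.4--4.7 of \cite{fh}, exactly as you do by specializing Proposition \ref{codmetanilpotentprime} and invoking Proposition \ref{codtoelement} for the converse). The only point worth making explicit is the classical fact that every group of square-free order is metacyclic, hence solvable and metanilpotent, which justifies your identification of the class $\mathcal{C}$ with both the lower class in Proposition \ref{codmetanilpotentprime} and the class of \emph{solvable} groups of square-free order appearing in the statement.
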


\begin{corollary}
 $F$ is isomorphic to the codegree prime graph of a solvable group of cube free order if and only
if $\overline{F}$ is triangle free and is $2$-colorable after removing one vertex.
\end{corollary}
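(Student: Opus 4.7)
The plan is to mirror the two-step strategy used in Proposition 3.5 and in the preceding $n^{th}$-power-free corollary: one implication comes from the fact that the element order prime graph is always a subgraph of the codegree prime graph (Qian--Isaacs, as cited just before Proposition 3.4), and the other comes from the element order analog combined with Proposition 3.4.

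For the forward direction, suppose $F\cong\Delta_{\cod}(G)$ for some solvable group $G$ of cube-free order, and let $F_1$ denote the element order prime graph of $G$. Every edge of $F_1$ is an edge of $F$ by the Qian--Isaacs result, so $\overline{F}$ is a subgraph of $\overline{F_1}$. The element order analog (Corollary~4.7 of \cite{fh}) asserts that $\overline{F_1}$ is triangle-free and becomes $2$-colorable after the removal of a single vertex. Both of these properties are inherited by subgraphs, so $\overline{F}$ satisfies the same conditions.

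For the converse, assume $\overline{F}$ is triangle-free and $2$-colorable after removing one vertex. By Corollary~4.7 of \cite{fh} there is a solvable group $G$ of cube-free order whose element order prime graph is $F$, built via the construction of Theorem~2.8 of \cite{gk}. As in the proof of Proposition~3.5, I would then verify that this construction yields a group in which every minimal normal subgroup is a Sylow subgroup and $|G/F(G)|$ is square-free; Proposition~3.4 then forces $\Delta_{\cod}(G)$ to coincide with the element order prime graph of $G$, giving $F\cong\Delta_{\cod}(G)$.

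The main obstacle is the structural check on the Theorem~2.8 construction in the cube-free setting. In the general $n^{th}$-power-free case one has to be careful about which primes end up with higher multiplicities in $|G|$, but when the order is cube-free each prime divides $|G|$ at most twice; the construction places the potential "square" part inside the Fitting subgroup (the modules $V_i$ contribute the repeated prime powers, while the acting Hall subgroup $G/F(G)$ is by design square-free). Once this is confirmed by directly inspecting the layers of the construction, the rest of the argument is formal and parallels Propositions~3.5 and~3.6 verbatim.
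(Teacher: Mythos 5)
Your proof is correct and follows essentially the same route the paper intends: the forward direction by the subgraph relation between the element order and codegree prime graphs, and the converse by taking the cube-free group from the element-order result of \cite{fh} (built via Theorem 2.8 of \cite{gk}) and feeding it into Proposition \ref{codtoelement}, exactly as in Propositions \ref{codsolvableprime} and 3.7 (equivalently, one may just specialize Proposition 3.7 to $n=3$ and use the graph-theoretic reformulation from \cite{fh}). The only quibble is the citation label --- the cube-free element-order analogue is the third item in the list of Corollaries 4.4--4.7 of \cite{fh}, not 4.7 itself --- which does not affect the mathematics.
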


\begin{corollary} The following are equivalent.

 (1) $F$ is isomorphic to the codegree prime graph of a solvable group of fourth-power-free order.

 (2) $F$ is isomorphic to the codegree prime graph of a solvable group of fifth-power-free order.

 (3) $\overline{F}$ is $3$-colorable, triangle free and satisfies one of the following conditions.

\ \ (a) $\overline{F}$ is $2$-colorable after removing one vertex;

\ \ (b) There exists a $3$-coloring of $\overline{F}$  by Red, Green, and Blue such that there are exactly $2$ red vertices. Furthermore, in the canonical orientation, no vertex is the end of a directed $2$-path starting from both of the red vertices.
\end{corollary}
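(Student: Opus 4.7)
The plan is to follow the template established by Propositions \ref{codsolvableprime} and \ref{codmetanilpotentprime} together with the preceding corollaries: reduce everything to the corresponding statement for the element order prime graph (Corollary 4.7 of \cite{fh}) and then transfer information between the two graphs via Proposition \ref{codtoelement} and the Qian--Isaacs theorem. Since (1) and (2) are handled in parallel, I would prove the four implications $(1)\Rightarrow(3)$, $(2)\Rightarrow(3)$, $(3)\Rightarrow(1)$, $(3)\Rightarrow(2)$ in two symmetric groups.

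For the forward directions, suppose $F=\Delta_{\cod}(G)$ for a solvable group $G$ of fourth- or fifth-power-free order, and let $F_1$ denote the element order prime graph of $G$. By the Qian--Isaacs theorem cited before Proposition \ref{codtoelement}, $F_1$ is a spanning subgraph of $F$ on the common vertex set $\pi(G)$, so $\overline{F}$ is a spanning subgraph of $\overline{F_1}$. By the element order analogue (Corollary 4.7 of \cite{fh}), $\overline{F_1}$ is triangle-free, $3$-colorable, and satisfies (a) or (b). I would then argue that each of these properties is monotone under passing to a spanning subgraph: the same proper $3$-coloring of $\overline{F_1}$ is a proper $3$-coloring of $\overline{F}$; triangle-freeness and ``$2$-colorable after removing one vertex'' are clearly inherited; and for (b) the same $3$-coloring with the same two red vertices, the same prime labels, and the restriction of the canonical orientation still work, because any directed $2$-path in $\overline{F}$ is also a directed $2$-path in $\overline{F_1}$.

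For the reverse directions, assume $\overline{F}$ satisfies the conditions in (3). By the element order version (Corollary 4.7 of \cite{fh}) there exists a solvable group $G$ of fourth-power-free order, and also one of fifth-power-free order, whose element order prime graph equals $F$. Crucially, the explicit construction, ultimately going back to Theorem 2.8 of \cite{gk}, yields a group in which every minimal normal subgroup is a Sylow subgroup of $G$ and $|G/F(G)|$ is square-free --- precisely the hypotheses of Proposition \ref{codtoelement}. That proposition then forces $\Delta_{\cod}(G)$ to coincide with the element order prime graph of $G$, which is $F$, simultaneously delivering $(3)\Rightarrow(1)$ and $(3)\Rightarrow(2)$.

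The main bookkeeping point I expect to have to handle carefully is the preservation of condition (b), because unlike triangle-freeness or (a) it is a structural condition involving a chosen coloring, a selected pair of red vertices, a labelling by primes, and the canonical orientation. The crux is that the $3$-coloring of $\overline{F_1}$ restricts on the common vertex set to $\overline{F}$ with the same two red vertices still present, and that deleting edges can only destroy directed $2$-paths, not create them, so a configuration forbidden in $\overline{F_1}$ remains forbidden in $\overline{F}$. Once this monotonicity is written out, everything else is a direct appeal to Proposition \ref{codtoelement} and to the element order characterisation in \cite{fh}.
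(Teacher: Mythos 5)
Your proposal is correct and follows essentially the same route as the paper: the paper obtains this corollary "similarly" to its Propositions \ref{codsolvableprime}--3.7, i.e.\ the forward direction by noting the element order prime graph is a spanning subgraph of $\Delta_{\cod}(G)$ (Qian--Isaacs) so the complement conditions of the element-order characterization in \cite{fh} pass to $\overline{F}$, and the converse by invoking the \cite{gk}/\cite{fh} construction, whose groups satisfy the hypotheses of Proposition \ref{codtoelement}, forcing the two graphs to coincide. Your extra care about the monotonicity of condition (b) under edge deletion in the complement is a reasonable elaboration of what the paper leaves implicit.
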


\bigskip
\noindent \textbf{Acknowledgments}

 The project is supported by NSFC (Grant Nos. 11671063, 11701421, and 11871011), the Simons Foundation (No. 499532), and the Science \& Technology Development  Fund of Tianjin Education Commission for Higher Education (2020KJ010).

\end{document}